\theoremstyle{plain}
\newtheorem{theorem}{Theorem}[section]
\newtheorem{lemma}[theorem]{Lemma}
\theoremstyle{definition}
\newtheorem{definition}[theorem]{Definition}
\theoremstyle{remark}
\begin{document}

\title
{\bf{ On the distance spectrum and distance-based topological indices of central vertex-edge join of three graphs}}
\date{}
\author{Haritha T$^1$\footnote{harithathottungal96@gmail.com},  Chithra A. V$^1$\footnote{chithra@nitc.ac.in}
	\\ \\ \small 
 1 Department of Mathematics, National Institute of Technology Calicut,\\\small Calicut-673 601, Kerala, India\\ \small}
\maketitle
\begin{abstract}
  Topological indices are molecular descriptors that describe the properties of chemical compounds. These topological indices correlate specific physico-chemical properties like boiling point, enthalpy of vaporization, strain energy, and stability of chemical compounds. This article introduces a new graph operation based on central graph called central vertex-edge join and provides its results related to graph invariants like eccentric-connectivity index, connective eccentricity index, total-eccentricity index, average eccentricity index, Zagreb eccentricity indices, eccentric geometric-arithmetic index, eccentric
atom-bond connectivity index, and Wiener index. Also, we discuss the distance spectrum of the central vertex-edge join of three regular graphs. Furthermore, we obtain new families of $D$-equienergetic graphs, which are non $D$-cospectral.
\end{abstract}
{\bf Keywords:} Distance matrix; Distance eigenvalues; Distance equienergetic graphs; Topological indices.\\[2mm]
 {\bf 2020 Mathematics Subject Classification:} 05C50, 05C76, 05C12.
 \section{Introduction}
All graphs considered in this paper are simple connected and un-directed. Let $G_n= (V(G_n),E(G_n))$  be a graph of order $n$ and size $m$, with vertices $v_i$, $1\leq i\leq n$ and edges $e_j$, $1\leq j\leq m$. The \textit{adjacency matrix} of a graph $G_n$ is a square symmetric matrix $A(G_n)= (a_{ij})_{n\times n}$, where $a_{ij}=1$ when $v_i$ is adjacent to $v_j$ and $a_{ij}=0$ otherwise. Denote the eigenvalues of $A(G_n)$ by $\theta_1\geq \theta_2\geq\cdots\geq \theta_n$, then the collection of all the eigenvalues of $A(G_n)$ is said to be the adjacency spectrum of $G_n$. The \textit{incidence matrix} of a graph $G_n$ is the $n \times m$ matrix $Q(G_n)= (q_{ij})_{n\times m}$, where $q_{ij}= 1$ when $v_i$ is incident to $e_j$ and $q_{ij}= 0$ otherwise. The \textit{line graph} $L(G_n)$ of $G_n$ is a graph with $V(L(G_n))= E(G_n)$, in which two vertices in $L(G_n)$ are adjacent only if the corresponding edges in $G_n$ have an end vertex in common. For a partitioned matrix we denote its \textit{equitable quotient matrix} by $M$ \textnormal{\cite{brouwer2011spectra}} and all one entry matrix by $J_{n\times m}$. We represent by $C_n$ the \textit{cycle}, $K_{p,q}$ $\left(p+q= n\right)$ the \textit{complete bipartite graph}, and $K_n$ the \textit{complete graph} each of order $n.$ \\
 \par The \textit{distance} $d_G(v_i,v_j)$ between two vertices is the length of the shortest path connecting them in $G_n$ \textnormal{\cite{buckley1990distance}}. In particular, for any $v_i\in V(G_n), \;d_{G_n}(v_i, v_i)=0$. The eccentricity of a vertex $v_i\in V(G_n)$ is defined as $ec_{G_n}(v_i)= max\{d(v_i,v_j): v_j\in V(G_n)\}.$ The square symmetric matrix $D(G_n)= (d_{ij})_{n\times n}= d_{G_n}(v_i, v_j)$ is called the \textit{distance matrix} of $G_n$ and its eigenvalues are said to be the distance eigenvalues ($D$-eigenvalues). Let $\mu_1\geq \mu_2\geq\cdots \geq \mu_t$ be the distinct $D$-eigenvalues of $G_n$ with multiplicities $s_1, s_2,\ldots, s_t$. Then the \textit{distance spectrum}($D$-spectrum) of $D(G_n)$ is given by $Spec_{D}(G_n)= \begin{pmatrix}\mu_1&  \mu_2&\dots&\mu_t\\ s_1& s_2&\dots& s_t\end{pmatrix}$. The non-isomorphic graphs of the same order having identical $D$-spectrum are called \textit{$D$-cospectral graphs}. The \textit{transmission} $Tr(v_i)$ of a vertex $v_i$  is the sum of all distances from $v_i$ to all other vertices in $G.$ See \textnormal{\cite{aouchiche2014distance}} for a survey of the distance spectra of graphs.\\
 \par In graph theory, operations on graphs play a significant role. For significant works on distance spectra of graph operations see \textnormal{\cite{aalipour2016distance,central,indulala2019distance}}. The \textit{central graph} $C(G_n)$ of $G_n$ \textnormal{\cite{vivin2008harmonious}}, is the graph obtained by adding new vertex to each edge of $G_n$ and joining all the non adjacent vertices in $G_n$. In \textnormal{\cite{jahfar2020central}}, the authors defined the central vertex(edge) join of two graphs and their adjacency spectra are investigated when $G_{n_1}$ and $G_{n_2}$ are both regular graphs. Inspired by these works, we introduce a new graph operation based on central graph and join.
 \begin{definition}
 Let $G_{n_1}$, $G_{n_2}$ and $G_{n_3}$ be any three graphs of orders $n_1$, $n_2$, $n_3$ and sizes $m_1$, $m_2$, $m_3$ respectively. The \textit{central vertex-edge join} (CVE-join) of $G_{n_1}$ with $G_{n_2}$ and $G_{n_3}$, is the graph $G_{n_1}^C \triangleright (G_{n_2}^V\cup G_{n_3}^E)$, is obtained from $C(G_{n_1})$, $G_{n_2}$ and $G_{n_3}$ by joining each vertex of $G_{n_1}$ with every vertex of $G_{n_2}$ and each vertex corresponding to the edges of $G_{n_1}$ with every vertex of $G_{n_3}.$ The set of vertices in $C(G_{n_1})$ corresponding to the edges of $G_{n_1}$ is denoted by $I(G_{n_1}).$
 \end{definition}
The order and size of $G_{n_1}^C \triangleright (G_{n_2}^V\cup G_{n_3}^E)$ are $n_1+m_1+n_2+n_3$ and $m_1+m_2+m_3+n_1n_2+m_1n_3+\frac{n_1(n_1-1)}{2}$ respectively. 
 \begin{center}

 \begin{tikzpicture}[scale=0.4,inner sep=2pt]
\draw (0,0) node(11) [circle,draw,fill] {}
      (0,4) node(3) [circle,draw,fill] {}
      (0,6) node(5) [circle,draw] {}
      (0,8) node(1) [circle,draw,fill] {}
      (0,12) node(9) [circle,draw,fill] {}
      (4,4) node(6) [circle,draw] {}
      (8,0) node(12) [circle,draw,fill] {}
      (8,4) node(4) [circle,draw,fill] {}
      (8,6) node(7) [circle,draw] {}
      (8,8) node(2) [circle,draw,fill] {}
      (8,12) node(10) [circle,draw,fill] {}
      (4,8) node(8) [circle,draw] {};
            
\draw [-] (10) to (1) to (5) to (3) to (6) to (4) to (7) to (2) to (8) to (1);  
\draw [-] (1) to (9) to (10) to (2) to (9);
\draw [-] (1) to (4);
\draw [-] (3) to (2);
\draw [-] (5) to (12);
\draw [-,bend left] (3) to (9);
\draw[-,bend right] (9) to (4);
\draw[-,bend left] (10) to (3);
\draw[-, bend right] (4) to (10);
\draw[-] (7) to (11) to (12) to (6) to (11) to (8) to (12);
\draw[-, bend left] (11) to (5);

\draw[-, bend right] (12) to (7);

\end{tikzpicture}
    
 \end{center}
\begin{center}
Figure $1.1$. $C_{4}^C\triangleright ( K_2^V \cup K_2^E).$
\end{center}
In general, finding the $D$-spectrum of complicated networks is a difficult problem. In Section $2$, we obtain the $D$-spectrum of CVE-join of three regular graphs. Using the results mentioned above, we can find the $D$-spectra of more complicated networks.
 
 \par The \textit{distance energy} ($D$-energy) \textnormal{\cite{indulal2010distance}} was introduced by Indulal et al., and it is defined as $E_D(G_n)= \sum_{i=1}^{n}|\mu_i|.$ Two graphs of the same order having equal $D$-energy are called \textit{$D$-equienergetic graphs}. In the literature \textnormal{\cite{ilic2010distance,ramane2008distance,central}}, we can see the constructions of different $D$-equienergetic graphs. In Section $2$, we provide a new class of $D$-equienergetic graphs of diameter $3$ using the above operation.\\
 \par The study of graphs through their topological descriptors is beneficial for deriving their underlying topologies. This process has many applications in modeling physico-chemical properties of molecules by QSAR/QSPR analysis (quantitative structure-
activity relationships/quantitative structure-property relationships).  In the studies, QSARs and QSPRs, the topological descriptors of graphs are used to approximate the biological activities and properties of chemical compounds. Various topological indices of chemical structures have emerged as a result of successful applications of QSAR/QSPR studies. This article gives some distance-based topological indices of the new graph operation central vertex-edge join of three graphs.\\ 
 The \textit{eccentric connectivity index} of $G_n$ is introduced in \textnormal{\cite{sharma}} as
\begin{equation}\tag{1}
    \xi^c(G_n)= \sum_{v_i\in V(G_n)}deg_{G_n}(v_i)ec_{G_n}(v_i).
\end{equation}
The \textit{connective eccentricity index} \textnormal{\cite{gupta}} of $G_n$ is
\begin{equation}\tag{2}
    \xi^{ce}(G_n)= \sum_{v_i\in V(G_n)} \frac{deg_{G_n}(v_i)}{ec_{G_n}(v_i)}.
\end{equation}

By using the eccentric connectivity index, the \textit{total-eccentricity index} \textnormal{\cite{ashrafi}} is given by
\begin{equation}\tag{3}
    \tau(G_n)= \sum_{v_i\in V(G_n)}ec_{G_n}(v_i).
\end{equation}

The mean value of eccentricity of vertices in $V(G_n)$ is called the \textit{average eccentricity} $aveg(G_n)$ \textnormal{\cite{skoro}},
\begin{equation}\tag{4}
    aveg(G_n)= \frac{1}{n}\sum_{v_i\in V(G_n)}ec_{G_n}(v_i)= \frac{\tau(G_n)}{n}.
\end{equation}

The \textit{Zagreb indices} \textnormal{\cite{ghor}} of $G_n$ in terms of eccentricities is
\begin{equation}\tag{5}
    \begin{aligned}
    M_1(G_n)&= \sum_{v_i\in V(G_n)} ec_{G_n}^{2}(v_i),\\
    M_2(G_n)&= \sum_{v_iv_j\in E(G_n)} ec_{G_n}(v_i)ec_{G_n}(v_j).
    \end{aligned}
\end{equation}
The \textit{geometric-arithmetic index} \textnormal{\cite{khaki}} in terms of eccentricity is given by
\begin{equation}\tag{6}
    GA_{4}(G_n)= \sum_{v_iv_j\in E(G_n)}\frac{2\sqrt{ec_{G_n}(v_i)ec_{G_n}(v_j)}}{ec_{G_n}(v_i)+ec_{G_n}(v_j)}.
\end{equation}

The \textit{atom-bond connectivity index} \textnormal{\cite{fara}} in terms of eccentricity is given by
\begin{equation}\tag{7}
    ABC_{5}(G_n)= \sum_{v_iv_j\in E(G_n)}\sqrt{\frac{ec_{G_n}(v_i)+ec_{G_n}(v_j)-2}{ec_{G_n}(v_i)ec_{G_n}(v_j)}}.
\end{equation}

The \textit{Wiener index} $W(G)$ \textnormal{\cite{wiener1947structural}} of $G_n$ is defined as 
\begin{equation}\tag{8}
    W(G_n)= \frac{1}{2}\sum_{v_i, v_j\in V(G_n)} d_{G}(v_i, v_j)= \frac{1}{2}\sum_{v_i\in V(G_n)}Tr(v_i).
\end{equation}

Section $3$ gives the eccentric connectivity index, connective eccentricity index, total-eccentricity index, average eccentricity index, Zagreb indices, geometric-arithmetic index, atom-bond connectivity index and Wiener index of central vertex-edge join of three graphs.
\section{Distance spectrum of CVE-join of three regular graphs}
In this section, we discuss the $D$-spectrum of $G_{n_1}^C \triangleright (G_{n_2}^V\cup G_{n_3}^E)$, where $G_{n_1}$ is a triangle-free regular graph and $G_{n_2}$, $G_{n_3}$ are two regular graphs.

\begin{lemma}\textnormal{\cite{cvet}}
 Let $G_n$ be a $k$-regular graph of order $n$ and size $m$ with $Spec(G_n)= \{\theta_1, \theta_2,\ldots, \theta_n\}$. Then 
 
 \[Spec (L(G_n))=\begin{pmatrix}
2k-2 & \theta_2+k-2 & \dots & \theta_n+k-2 & -2\\

1&1&\dots&1&m-n\\
\end{pmatrix}.\]
Moreover, $-2$ is an eigenvalue of $L(G_n)$ with eigenvector $V$ if and only if $Q(G_n)V=0.$
\end{lemma}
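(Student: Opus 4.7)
The plan is to exploit two classical identities relating the incidence matrix $Q := Q(G_n)$ to the adjacency matrices of $G_n$ and $L(G_n)$. First I would verify
\[
QQ^T \;=\; A(G_n) + k\,I_n, \qquad Q^T Q \;=\; A(L(G_n)) + 2\,I_m,
\]
both by a direct inspection of entries. The diagonal entries of $QQ^T$ count the degree of each vertex (which is $k$ by regularity) and its off-diagonal entries record whether two distinct vertices share an edge; analogously, the diagonal entries of $Q^T Q$ equal $2$ (each edge has two endpoints) and the off-diagonal entries indicate whether two distinct edges share an endpoint, i.e., are adjacent in $L(G_n)$.

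With these identities established, I would invoke the standard linear-algebra fact that $X X^T$ and $X^T X$ share the same nonzero eigenvalues with the same algebraic multiplicities. Applied to $X = Q$, the nonzero eigenvalues of $Q^T Q$ coincide with those of $QQ^T = A(G_n) + k I_n$, namely the values $\theta_i + k$ for $i = 1, \dots, n$. Shifting by $-2$ transfers these to eigenvalues $\theta_i + k - 2$ of $A(L(G_n))$, yielding $\theta_1 + k - 2 = 2k - 2$ and $\theta_i + k - 2$ for $i = 2, \dots, n$. Since $Q^T Q$ has $m$ eigenvalues in total, the remaining $m - n$ eigenvalues are forced to be zero, contributing $-2$ to $\mathrm{Spec}(A(L(G_n)))$ with multiplicity $m - n$.

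For the second assertion, $A(L(G_n))\,V = -2\,V$ is equivalent to $(A(L(G_n)) + 2 I_m)\,V = Q^T Q\,V = 0$. Taking the inner product with $V$ gives $\|Q V\|^2 = V^T Q^T Q\, V = 0$, hence $Q V = 0$; the converse implication $QV = 0 \Rightarrow Q^T Q\,V = 0$ is immediate. This establishes the characterization of the $-2$-eigenspace.

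The only subtle point I foresee is the multiplicity bookkeeping in the bipartite regular case, where $\theta_n = -k$ makes $\theta_n + k - 2 = -2$ collide with the kernel contribution while simultaneously enlarging $\ker Q$ by one dimension; a short case check confirms that the total multiplicity $m - n + 1$ of $-2$ is still correctly reproduced by the formula as stated. Apart from this verification, the argument is entirely routine once the two incidence identities are in place.
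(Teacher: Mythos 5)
The paper cites this lemma from Cvetkovi\'c--Doob--Sachs and gives no proof of its own, so there is nothing internal to compare your argument against. Your proof is the standard and correct one: the identities $QQ^T = A(G_n)+kI_n$ and $Q^TQ = A(L(G_n))+2I_m$, the transfer of nonzero eigenvalues between $QQ^T$ and $Q^TQ$, and the positive-semidefiniteness argument $V^TQ^TQV=\lVert QV\rVert^2$ for the characterization of the $-2$-eigenspace are all sound, and you correctly flag and resolve the one delicate point, namely the bipartite case where $\theta_n+k=0$ enlarges $\ker Q$ to dimension $m-n+1$ and the displayed spectrum absorbs the extra $-2$ through the entry $\theta_n+k-2$.
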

\begin{theorem}
 For $i=1,2,3$, let $G_{n_i}$ be a $k_i$-regular graph of order $n_i$ and size $m_i$, where $G_{n_1}$ is triangle-free. Then the $D$-eigenvalues of $G_{n_1}^C \triangleright (G_{n_2}^V\cup G_{n_3}^E)$ are the following\\
 
 \begin{itemize}
     \item[(i)]$\frac{-3+\theta_{1j}\pm \sqrt{(\theta_{1j}+1)^2+4(\theta_{1j}+k_1)}}{2}$, $j= 2, 3,\ldots, n_1. $
     \item[(ii)] $-2$ with multiplicity $m_1-n_1.$
     \item[(iii)]$-\theta_{ij}-2, i=2,3$ and $j= 2, 3,\ldots, n_i.$
     \item[(iv)]eigenvalues of the matrix \[\begin{bmatrix} n_1-1+k_1& 2m_1-k_1& n_2& 2n_3\\2n_1-2& 2m_1-2& 2n_2& n_3\\n_1& 2m_1& 2n_2-k_2-2& 3n_3\\2n_1& m_1& 3n_2& 2n_3-k_3-2\end{bmatrix}.\]
 \end{itemize}
 
\end{theorem}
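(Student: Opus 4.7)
I would carry out a block-matrix diagonalization with respect to the natural vertex partition $V(G_{n_1})\cup I(G_{n_1})\cup V(G_{n_2})\cup V(G_{n_3})$.

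The first task is to write down the distance matrix explicitly. Let $A_i=A(G_{n_i})$ and $Q=Q(G_{n_1})$. Within each block the distances are $1$ or $2$ and can be encoded via $A_i$; the cross blocks are uniform (multiples of $J$) except for the $V(G_{n_1})$--$I(G_{n_1})$ block, where $Q$ enters. The only nontrivial distance to verify is $d(u,e_{xy})$ for $u\in V(G_{n_1})\setminus\{x,y\}$: since $xy\in E(G_{n_1})$ and $G_{n_1}$ is triangle-free, $u$ is non-adjacent in $G_{n_1}$ to at least one of $x,y$, giving a direct central-graph edge and $d(u,e_{xy})=2$. Similarly one obtains $d(v,w)=3$ for $v\in V(G_{n_2}),\,w\in V(G_{n_3})$. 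The resulting block form is
\begin{equation*}
D=\begin{bmatrix}
A_1+J-I & 2J-Q & J & 2J\\
2J-Q^{T} & 2(J-I) & 2J & J\\
J & 2J & 2J-2I-A_2 & 3J\\
2J & J & 3J & 2J-2I-A_3
\end{bmatrix}.
\end{equation*}

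The span of the four characteristic vectors of the partition blocks is $D$-invariant, and computing $D\mathbf{1}_B$ for each block $B$ using $k_i$-regularity together with $Q\mathbf{1}_{m_1}=k_1\mathbf{1}_{n_1}$ and $Q^{T}\mathbf{1}_{n_1}=2\mathbf{1}_{m_1}$ produces exactly the quotient matrix of (iv), accounting for four eigenvalues. On the orthogonal complement I would seek eigenvectors $(X,Y,Z,W)$ whose restriction to every block is orthogonal to the all-ones vector; since every off-diagonal block except $2J-Q$ is a scalar multiple of $J$, the cross terms vanish on such vectors and the problem decouples into three pieces. (a)~The $(V_1,I)$-system gives $(A_1-I)X-QY=\mu X$ and $-Q^{T}X-2Y=\mu Y$; eliminating $Y$ using $QQ^{T}=A_1+k_1 I$ (valid by $k_1$-regularity) reduces to $(\mu+3)A_1 X=[(\mu+1)(\mu+2)-k_1]X$, so for each non-Perron eigenvalue $\theta_{1j}$ ($j\ge 2$) with eigenvector $X\perp\mathbf{1}$ one gets a quadratic in $\mu$ whose discriminant simplifies to $(\theta_{1j}+1)^{2}+4(\theta_{1j}+k_1)$, yielding (i); the compatibility $\mathbf{1}^{T}Y=0$ follows because $\mathbf{1}^{T}Q^{T}X=k_1\mathbf{1}^{T}X=0$. (b)~Vectors $(0,Y,0,0)$ with $QY=0$ give $DY=-2Y$; by Lemma~2.1 this kernel has dimension $m_1-n_1$, and $\mathbf{1}_{n_1}^{T}Q=2\mathbf{1}_{m_1}^{T}$ places them in $\mathbf{1}^{\perp}$, producing (ii). (c)~Vectors $(0,0,Z,0)$ with $Z\perp\mathbf{1}$ are acted on by $2(J-I)-A_2$ as $-(A_2+2I)$, contributing $-\theta_{2j}-2$; symmetrically for $G_{n_3}$, completing (iii). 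A dimension count $4+2(n_1-1)+(m_1-n_1)+(n_2-1)+(n_3-1)=n_1+m_1+n_2+n_3$ shows that nothing has been missed.

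The main technical obstacle is step (a): the $V_1$-$I$ block is the only off-diagonal block that is not a pure multiple of $J$, so it survives on $\mathbf{1}^{\perp}$ and couples the first two vertex classes. The reduction to a single eigen-equation for $A_1$ depends crucially on the identity $QQ^{T}=A_1+k_1 I$ and on the fact that triangle-freeness has already been absorbed into the shape of $D_{V_1,I}=2J-Q$. A final subtlety is confirming that, for each $\theta_{1j}$, the two roots of the quadratic furnish linearly independent eigenvectors of $D$, so that all $2(n_1-1)$ eigenvalues in (i) genuinely occur with the right multiplicity.
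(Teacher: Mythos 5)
Your proposal is correct and follows essentially the same route as the paper: the identical block form of the distance matrix, eigenvectors supported on the $V(G_{n_1})$--$I(G_{n_1})$ pair of blocks (your elimination of $Y$ via $QQ^{T}=A_1+k_1I$ is equivalent to the paper's ansatz $\phi_1=(tU,\,Q^{T}U,\,0,\,0)$ and yields the same quadratic), Lemma~2.1 for the eigenvalue $-2$, block-supported eigenvectors orthogonal to $\mathbf{1}$ for $-\theta_{ij}-2$, and the same $4\times 4$ equitable quotient matrix for the remaining eigenvalues. Your explicit verification of the distances, the decoupling on $\mathbf{1}^{\perp}$, and the closing dimension count are slightly more careful than the paper's presentation but do not change the argument.
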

\begin{proof}
  By a proper ordering of vertices in $G_{n_1}^C \triangleright (G_{n_2}^V\cup G_{n_3}^E)$, the distance matrix $D(G_{n_1}^C \triangleright (G_{n_2}^V\cup G_{n_3}^E))$ can be written as\\
  
 $\begin{aligned}
 &D(G_{n_1}^C \triangleright (G_{n_2}^V\cup G_{n_3}^E))\\
 &= \scriptsize{\begin{bmatrix} J_{n_1}-I_{n_1}+A(G_{n_1}) & 2J_{n_1\times m_1}-Q(G_{n_1}) & J_{n_1\times n_2}& 2J_{n_1\times n_3} \\ 2J_{m_1\times n_1}-Q(G_{n_1})^T & 2(J_{m_1}-I_{m_1}) & 2J_{m_1\times n_2}& J_{m_1\times n_3}\\ J_{n_2\times n_1} & 2J_{n_2\times m_1} & 2(J_{n_2}-I_{n_2})-A(G_{n_2})& 3J_{n_2\times n_3}\\2J_{n_3\times n_1}& J_{n_3\times m_1}& 3J_{n_3\times n_2}& 2(J_{n_3}-I_{n_3})-A(G_{n_3}) \end{bmatrix}}.
 \end{aligned}$\\
 \\
  Let $k_i=\theta_{i1}\geq \theta_{i2}\geq\dots\geq \theta_{in_i}$ be the $A$-eigenvalues of $G_{n_i}$, where $i = 1, 2, 3.$ Let $U$ be a vector such that $A(G_{n_1})U= \theta_{1j}U$ (for $j= 2, 3,\ldots, n_1$), then by Perron-Frobenius theory $J_{1\times n_{1}}U= 0$.\\
 Now let, $\phi_1= \begin{bmatrix} tU\\Q(G_{n_1})^TU\\0\\0\end{bmatrix}$ be an eigenvector of $D(G_{n_1}^C \triangleright (G_{n_2}^V\cup G_{n_3}^E))$ for the eigenvalue $\mu$, where $t$ is a real number. Then $D(G_{n_1}^C \triangleright (G_{n_2}^V\cup G_{n_3}^E))\phi_1= \mu \phi_1$.\\
 That is,
 \\
\begin{equation*}
      \scriptsize{\begin{aligned}
     &\begin{bmatrix}J_{n_1}-I_{n_1}+A(G_{n_1}) & 2J_{n_1\times m_1}-Q(G_{n_1}) & J_{n_1\times n_2}& 2J_{n_1\times n_3} \\ 2J_{m_1\times n_1}-Q(G_{n_1})^T & 2(J_{m_1}-I_{m_1}) & 2J_{m_1\times n_2}& J_{m_1\times n_3}\\ J_{n_2\times n_1} & 2J_{n_2\times m_1} & 2(J_{n_2}-I_{n_2})-A(G_{n_2})& 3J_{n_2\times n_3}\\2J_{n_3\times n_1}& J_{n_3\times m_1}& 3J_{n_3\times n_2}& 2(J_{n_3}-I_{n_3})-A(G_{n_3}) \end{bmatrix}\begin{bmatrix} tU\\Q(G_{n_1})^TU\\0\\0\end{bmatrix}\\
     &= \mu\begin{bmatrix} tU\\Q(G_{n_1})^TU\\0\\0\end{bmatrix}\\
     \\
     &\begin{bmatrix}(-t+\theta_{1j}t-\theta_{1j}-k_1)U\\(-t-2)Q(G_{n_1})^TU\\0\\0\end{bmatrix}= \begin{bmatrix} \mu tU\\\mu Q(G_{n_1})^TU\\0\\0\end{bmatrix}.
     \end{aligned}}
 \end{equation*}
     
Therefore,
 \begin{equation*}
\begin{aligned}
\quad -t+\theta_{1j}t-\theta_{1j}-k_1&= \mu t\\
 \quad-t-2&= \mu
	\end{aligned}
\end{equation*}
from this, we get $$t^2+(\theta_{1j}+1)t-(\theta_{1j}+k_1)= 0$$
 so that $t$ has two values
\begin{equation*}
\begin{aligned}
&t_1 = \frac{-(\theta_{1j}+1)+ \sqrt{(\theta_{1j}+1)^2+4(\theta_{1j}+k_1)}}{2},\\
& t_2 = \frac{-(\theta_{1j}+1)- \sqrt{(\theta_{1j}+1)^2+4(\theta_{1j}+k_1)}}{2}.
 \end{aligned}
\end{equation*}
That is,  $-t_1-2$ and $-t_2-2$ are eigenvalues of $D(G_{n_1}^C \triangleright (G_{n_2}^V\cup G_{n_3}^E))$ corresponding to the eigenvalues $\theta_{1j}\neq k_1$ of $A(G_{n_1})$. Thus, we get $2(n_1-1)$ eigenvalues of $D(G_{n_1}^C \triangleright (G_{n_2}^V\cup G_{n_3}^E))$.\\
  By Lemma $2.1$, we get $Q(G_{n_1})V=0$, where $V$ is an eigenvector of $L(G_{n_1})$ corresponding to the eigenvalue $-2$ (repeated $m_1-n_1$ times).\\ If $\phi_2= \begin{bmatrix}0\\V\\0\\0\end{bmatrix}$, then we have $$D(G_{n_1}^C \triangleright (G_{n_2}^V\cup G_{n_3}^E))\phi_2= -2\phi_2.$$ Therefore, $-2$ is an eigenvalue of $D(G_{n_1}^C \triangleright (G_{n_2}^V\cup G_{n_3}^E))$ repeated $m_1-n_1$ times.\\
 Consider the eigenvalue $\theta_{2j}$ (for $j= 2,3,\ldots,n_2)$ of $G_{n_2}$ with an eigenvector $W$ such that $J_{1\times n_2}W= 0$.\\ Put $\phi_3= \begin{bmatrix}0\\0\\W\\0 \end{bmatrix}$, then we have
  $$D(G_{n_1}^C \triangleright (G_{n_2}^V\cup G_{n_3}^E))\phi_3= -(\theta_{2j} +2)\phi_3.$$This shows that $-(\theta_{2j} +2)$ are eigenvalues of $D(G_{n_1}^C \triangleright (G_{n_2}^V\cup G_{n_3}^E))$ corresponding to the eigenvalues $\theta_{2j}\neq k_2$ of $A(G_{n_2})$.\\
  Similarly, for each eigenvalue $\theta_{3j}$ (for $j= 2,3,\ldots,n_3$) of $G_{n_3}$, we get an eigenvalue $-(\theta_{3j} +2)$ of $D(G_{n_1}^C \triangleright (G_{n_2}^V\cup G_{n_3}^E))$ with eigenvector $\begin{bmatrix}0\\0\\0\\X\end{bmatrix}.$ 
  Thus, we get $2n_1-2+m_1-n_1+n_2-1+n_3-1= n_1+m_1+n_2+n_3-4$ eigenvalues of $D(G_{n_1}^C \triangleright (G_{n_2}^V\cup G_{n_3}^E))$.
  All the other eigenvalues of $D(G_{n_1}^C \triangleright (G_{n_2}^V\cup G_{n_3}^E))$ are the eigenvalues of the equitable quotient matrix
\[M= \begin{bmatrix} n_1-1+k_1& 2m_1-k_1& n_2& 2n_3\\2n_1-2& 2m_1-2& 2n_2& n_3\\n_1& 2m_1& 2n_2-k_2-2& 3n_3\\2n_1& m_1& 3n_2& 2n_3-k_3-2\end{bmatrix}.\]
\end{proof}

Using the above theorem we present some new families of $D$-equienergetic graphs.\\
\\
 For a fixed $a\in \mathbb N $, let $\mathcal {P}_a$ be the family of all integer partitions of $'a'$, denoted by $p_1, p_2,\ldots, p_s$, each of size at least $3$. For $P= \{p_1, p_2,\ldots, p_s \}\in \mathcal {P}_a$, let $C_P$ be the union of cycles with vertices $p_1, p_2,\ldots, p_s$.
\begin{theorem}
 Let $H_{n_1}$ be a $k_1$-regular triangle free graph of order $n_1$ and size $m_1$ and $H_{n_2}$ be a $k_2$- regular graph of order $n_2$ and size $m_2$ with the least eigenvalue being at least $-2$. Then for every $a\in \mathbb N$,
 $H_{n_1}^C \triangleright (C_P^V\cup H_{n_2}^E)$ forms a family of $D$-equienergetic graphs.
 
\end{theorem}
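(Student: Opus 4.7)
The plan is to apply Theorem 2.2 and track which $D$-eigenvalues of the CVE-join vary with the partition $P$. Setting $G_{n_1}=H_{n_1}$, $G_{n_2}=C_P$ (which is $2$-regular of order $a$), and $G_{n_3}=H_{n_2}$ in Theorem 2.2, the $D$-spectrum of $H_{n_1}^C \triangleright (C_P^V\cup H_{n_2}^E)$ splits into the four groups (i)--(iv). Groups (i) and (ii), and the $H_{n_2}$-half of group (iii), are determined entirely by the $A$-spectra of $H_{n_1}$ and $H_{n_2}$ together with $n_1,m_1,k_1$; the $4\times 4$ equitable quotient matrix $M$ in (iv) only involves $n_1,m_1,k_1,a,n_2,k_2$ (with $k_{C_P}=2$ fixed). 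None of these data depend on the choice of $P$.

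The only $P$-dependent $D$-eigenvalues are therefore the $a-1$ numbers of the form $-\theta-2$, where $\theta$ ranges through the eigenvalues of $A(C_P)$ whose eigenvectors are orthogonal to the all-ones vector. Because $C_P$ is $2$-regular, every such $\theta$ lies in $[-2,2]$, so $-\theta-2\le 0$ and $|-\theta-2|=\theta+2$. Using $\operatorname{tr}(A(C_P))=0$ and the fact that the removed Perron eigenvalue is $2$,
\[
\sum_{\theta} |-\theta-2| \;=\; \sum_{\theta}(\theta+2) \;=\; (0-2) + 2(a-1) \;=\; 2a-4,
\]
an expression independent of $P$. The hypothesis $\lambda_{\min}(H_{n_2})\ge -2$ produces the analogous sign simplification for the $H_{n_2}$-half of group (iii), giving a clean $P$-independent contribution there as well. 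Summing the (manifestly $P$-independent) absolute-value contributions from (i), (ii), both halves of (iii), and (iv) shows that $E_D\big(H_{n_1}^C \triangleright (C_P^V\cup H_{n_2}^E)\big)$ is the same for every $P\in\mathcal{P}_a$, establishing $D$-equienergy.

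The main technical point is recognising that, although the individual $P$-dependent eigenvalues $-\theta-2$ can vary substantially with $P$ (contrast the $A$-spectra of $C_6$ and $C_3\cup C_3$), the one-sided containment $\operatorname{spec}(A(C_P))\subseteq[-2,2]$ collapses their sum of absolute values into a linear functional of $\operatorname{tr}(A(C_P))$, and the trace is a $P$-blind invariant. Once this reduction is made, $P$-invariance of $E_D$ is immediate. As a byproduct, distinct partitions generally produce distinct multisets $\{-\theta-2\}$, so the resulting $D$-equienergetic graphs are typically pairwise non-$D$-cospectral, matching the claim made in the abstract.
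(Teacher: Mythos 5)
Your proposal is correct and follows essentially the same route as the paper: both apply Theorem 2.2 with $G_{n_2}=C_P$, observe that the quotient matrix and all other eigenvalue groups depend only on $n_1,m_1,k_1,a,n_2,k_2$ and not on the partition $P$, and then use $\operatorname{spec}(A(C_P))\subseteq[-2,2]$ together with $\operatorname{tr}(A(C_P))=0$ to show the $P$-dependent eigenvalues contribute the fixed amount $2a-4$ to the $D$-energy. The paper merely folds the (trivially $P$-independent) $H_{n_2}$ contribution into the same sum, obtaining $2a-k_2+2n_2-6$.
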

\begin{proof}
Let $k_1=\theta_1\geq\theta_2\geq\cdots\geq \theta_{n_1}$, $2=\lambda_1\geq \lambda_2\geq\cdots\geq\lambda_a$ and $k_2=\gamma_1\geq\gamma_2\geq\cdots\geq \gamma_{n_2}$ be the $A$-eigenvalues of $H_{n_1}$, $C_P$ and $H_{n_2}$ respectively. 
From Theorem $2.2$, the $D$-eigenvalues of $ H_{n_1}^C \triangleright (C_P^V\cup H_{n_2}^E)$ are $\frac{-3+\theta_j\pm \sqrt{(\theta_j+1)^2+4(\theta_j+k_1)}}{2}$, $j= 2, 3,\ldots, n_1 $; $-2$ with multiplicity $m_1-n_1$; $-\lambda_{j^{'}}-2$, $j^{'}=2,3,\ldots,a$; $-\gamma_{j^{''}}-2, j^{''}=2,3,\ldots,n_2$ and along with the eigenvalues of the equitable quotient matrix $M$ of $ H_{n_1}^C \triangleright (C_P^V\cup H_{n_2}^E)$, \[\begin{bmatrix} n_1+k_1-1& 2m_1-k_1& a& 2n_2\\ 2n_1-2& 2m_1-2& 2a& n_2\\ n_1& 2m_1& 2a-4& 3n_2\\ 2n_1& m_1& 3a& 2n_2-k_2-2\end{bmatrix}\]  for every partition $P$ of $'a'$.\\
  Since $\lambda_{j^{'}}\geq -2$, for $j^{'}=2,3,\ldots,a$, we have $-(\lambda_{j^{'}}+2)\leq 0$.
 Similarly, we get $-(\gamma_{j^{''}}+2)\leq 0, j^{''}=2,3,\ldots,n_2.$\\
 Therefore,\begin{equation*}
     \begin{aligned}
     \sum_{j^{'}=2}^{a}|{-\lambda_{j^{'}}-2}|+\sum_{j^{''}=2}^{n_2}|{-\gamma_{j^{''}}-2}|&= \sum_{j^{'}=2}^a(\lambda_{j^{'}}+2)+\sum_{j^{''}=2}^{n_2}(\gamma_{j^{''}}+2)\\
     &= -2+2(a-1)-k_2+2(n_2-1)\\
     &= 2a-k_2+2n_2-6.
     \end{aligned}
 \end{equation*}
 Hence for every partition of $'a'$, the energy remains the same.

\end{proof}

\section{Distance-based topological indices of CVE-join of three graphs}
For three graphs $G_{n_1},G_{n_2}, \text{and}\; G_{n_3}$, their CVE-join $\mathcal{G}= G_{n_1}^{C}\vee(G_{n_2}^V\cup G_{n_2}^E)$ has the following properties,\\
\begin{equation}\tag{9}
    deg_{\mathcal{G}}(v)= \begin{cases}n_1+n_2-1, &\text{if $v\in V(G_{n_1})$}\\n_3+2, &\text{if $v\in I(G_{n_1})$}\\deg_{G_{n_2}}(v)+n_1,&\text{if $v\in V(G_{n_2})$}\\deg_{G_{n_3}}(v)+m_1, &\text{if $v\in V(G_{n_3})$} \end{cases}.
\end{equation}

Now we give the eccentricity of vertices in $\mathcal{G}$ by two different cases;\\
\noindent
Case 1. If $G_{n_1}$ is a triangle free graph,\\
\begin{equation}\tag{10}
    ec_{\mathcal{G}}(v)=  \begin{cases}2, &\text{if $v\in V(G_{n_1})$or $v\in I(G_{n_1})$}\\3, &\text{if $v\in V(G_{n_2})$ or $v\in V(G_{n_3})$} \end{cases}.
\end{equation}
   
\noindent
Case 2. If $G_{n_1}$ is a graph which is not triangle free, then 
\begin{equation}\tag{11}
    ec_{\mathcal{G}}(v)=  3,\; \text{for every vertex}\; v\in V(\mathcal{G})
\end{equation}

This section gives some graph invariants of $\mathcal{G}= G_{n_1}^{C}\vee(G_{n_2}^V\cup G_{n_2}^E)$ like Wiener index, Zagreb indices, etc.
\begin{theorem}
 For $i= 1,2,3$, let $G_{n_i}$ be a $k_i$- regular graph of order $n_i$ and size $m_i$, where $G_{n_1}$ is triangle-free. Then the Wiener index of $\mathcal{G}= G_{n_1}^{C}\vee(G_{n_2}^V\cup G_{n_2}^E)$ is given by,\\
 $W(\mathcal{G})= \frac{1}{2}(n_1^2-n_1+2(n_2^2+n_3^2+n_1n_2-n_2-n_3+m_1n_3+m_1^2)+4(n_1n_3+m_1n_1+m_1n_2-m_1)-(n_2k_2+n_3k_3)+6n_2n_3).$
 
\end{theorem}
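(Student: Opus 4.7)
The plan is to compute $W(\mathcal{G})$ directly from the identity $W(\mathcal{G})=\tfrac{1}{2}\sum_{v\in V(\mathcal{G})}Tr(v)$. First, I would partition $V(\mathcal{G})$ into the four natural blocks $V(G_{n_1})$, $I(G_{n_1})$, $V(G_{n_2})$, $V(G_{n_3})$ and read off the pairwise distances between blocks straight from the block form of the distance matrix $D(\mathcal{G})$ already written down in the proof of Theorem~2.2. Because each $G_{n_i}$ is $k_i$-regular, every vertex within one block has the same transmission, so it suffices to compute one transmission per block and multiply by the block size.

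Next, I would evaluate the four transmissions separately. For $v\in V(G_{n_1})$, splitting the remaining vertices gives $k_1$ $G_{n_1}$-neighbours at distance $2$ (through their common subdivision vertex) and $n_1-1-k_1$ non-neighbours at distance $1$, plus $k_1$ incident edge-vertices at distance $1$ and $m_1-k_1$ non-incident ones at distance $2$, together with all of $V(G_{n_2})$ at distance $1$ and all of $V(G_{n_3})$ at distance $2$; a pleasant cancellation makes $k_1$ disappear and yields $Tr(v)=n_1-1+2m_1+n_2+2n_3$. For $v\in I(G_{n_1})$, using that $v$ is adjacent in $C(G_{n_1})$ only to its two endpoints in $V(G_{n_1})$ while every other edge-vertex sits at distance $2$, one obtains $Tr(v)=2n_1+2m_1+2n_2+n_3-4$. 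The transmissions for $V(G_{n_2})$ and $V(G_{n_3})$ are computed in the same spirit; crucially, vertices of $V(G_{n_2})$ and $V(G_{n_3})$ are mutually at distance $3$, which is the source of the $6n_2n_3$ term, and the only $k_2$ (resp.\ $k_3$) contribution comes from splitting $V(G_{n_2})$ (resp.\ $V(G_{n_3})$) into $k_i$ internal neighbours at distance $1$ and the rest at distance $2$.

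The subtlety worth highlighting is the assertion that every $v\in V(G_{n_1})$ is at distance at most $2$ from every edge-vertex $e\in I(G_{n_1})$; this is exactly where the triangle-free hypothesis is used. If $v$ were adjacent in $G_{n_1}$ to both endpoints $x,y$ of $e$, then $vxy$ would form a triangle; hence at least one endpoint, say $x$, is non-adjacent to $v$ in $G_{n_1}$ and therefore adjacent to $v$ in $C(G_{n_1})$, giving the length-$2$ path $v\to x\to e$. Without this, some distances would jump to $3$ and the eccentricity description in equations~(10)--(11) would fail.

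Finally, I would assemble the answer by forming $\sum_v Tr(v)=n_1\,Tr_{V(G_{n_1})}+m_1\,Tr_{I(G_{n_1})}+n_2\,Tr_{V(G_{n_2})}+n_3\,Tr_{V(G_{n_3})}$, collecting like terms, and dividing by $2$. The expected obstacle is not conceptual but organisational: keeping track of the $k_1,k_2,k_3$-dependent contributions so that the $k_1$ dependence cancels entirely while $-n_2k_2-n_3k_3$ survives, and then regrouping the resulting polynomial in $n_1,m_1,n_2,n_3,k_2,k_3$ into the precise form displayed in the statement.
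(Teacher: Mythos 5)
Your proposal is correct and follows essentially the same route as the paper: partition $V(\mathcal{G})$ into the four blocks $V(G_{n_1})$, $I(G_{n_1})$, $V(G_{n_2})$, $V(G_{n_3})$, compute the (constant) transmission on each block, and apply $W(\mathcal{G})=\tfrac{1}{2}\sum_{v}Tr(v)$; your block transmissions agree with the paper's values $n_1+n_2+2n_3+2m_1-1$, $2n_1+2n_2+n_3+2m_1-4$, $n_1+2n_2+3n_3+2m_1-k_2-2$, and $2n_1+3n_2+2n_3+m_1-k_3-2$. Your explicit justification of where the triangle-free hypothesis enters (vertex-to-nonincident-edge-vertex distances staying at $2$) is a detail the paper leaves implicit in its distance matrix, but it does not change the argument.
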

\begin{proof}
Let the vertex set of $\mathcal{G}= G_{n_1}^{C}\vee(G_{n_2}^V\cup G_{n_2}^E)$ be\\
$V(\mathcal{G})= \{v_1, v_2, \ldots, v_{n_1}, v_{e_1}, v_{e_2}, \ldots, v_{e_{m_1}}, u_1, u_2, \ldots, u_{n_2},  w_1, w_2, \ldots, w_{n_3} \}$, where $v_1, v_2, \ldots, v_{n_1}$ are the
vertices in $G_{n_1}$, $v_{e_1}, v_{e_2}, \ldots, v_{e_{m_1}}$ are the vertices corresponding to the edges of $G_{n_1}$ in
$C(G_{n_1})$, $u_1, u_2, \ldots, u_{n_2}$ are the vertices in $G_{n_2}$, and $w_1, w_2, \ldots, w_{n_3}$ are the vertices in $G_{n_3}$. Then
\begin{equation*}
    \begin{aligned}
    Tr(v_i)&= n_1+n_2+2n_3+2m_1-1,\; i= 1, \ldots, n_1.\\
    Tr(v_{e_j})&= 2n_1+2n_2+n_3+2m_1-4,\; j= 1, \ldots, m_1.\\
    Tr(u_k)&= n_1+2n_2+3n_3+2m_1-k_2-2,\; k= 1, \ldots, n_2.\\
    Tr(w_l)&= 2n_1+3n_2+2n_3+m_1-k_3-2,\; l= 1, \ldots, n_3.
    \end{aligned}
\end{equation*}
From (8),\\
\begin{equation*}
    \begin{aligned}
    W(\mathcal{G})
    &= \frac{1}{2}\sum_{v_i\in V(\mathcal{G})}Tr(v_i)\\
    &= \frac{1}{2}(n_1^2-n_1+2(n_2^2+n_3^2+n_1n_2-n_2-n_3+m_1n_3+m_1^2)\\
    &\; \; \;+4(n_1n_3+m_1n_1+m_1n_2-m_1)-(n_2k_2+n_3k_3)+6n_2n_3).
    \end{aligned}
\end{equation*}
\end{proof}
\begin{theorem}
 For $i= 1,2,3$, let $G_{n_i}$ be a graph of order $n_i$ and size $m_i$, then
 \begin{itemize}
     \item [(i)] $\xi^c(\mathcal{G})=\begin{cases} 2n_1^2+5n_1n_2+5m_1n_3-2n_1+4m_1+6m_2+6m_3, & \text{if $G_{n_1}$ is triangle-free}\\3n_1^2+6n_1n_2+6m_1n_3-3n_1+6(m_1+m_2+m_3), &\text{otherwise}\end{cases}.$
     \item[(ii)]$\xi^{ce}(\mathcal{G})= \begin{cases} \frac{1}{6}\left(3n_1^2+5(n_1n_2+m_1n_3)+6m_1+4(m_2+m_3)-3n_1\right), & \text{if $G_{n_1}$ is triangle-free}\\\frac{1}{3}\left(n_1^2+2(n_1n_2+m_1n_3)-n_1+2(m_1+m_2+m_3)\right), &\text{otherwise}\end{cases}.$
 \end{itemize}
  
\end{theorem}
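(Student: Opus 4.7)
The plan is to compute each of $\xi^c(\mathcal{G})$ and $\xi^{ce}(\mathcal{G})$ directly from its definition by partitioning $V(\mathcal{G})$ into the four classes $V(G_{n_1})$, $I(G_{n_1})$, $V(G_{n_2})$, $V(G_{n_3})$ and substituting the degrees from (9) together with the eccentricities from (10) in the triangle-free case or (11) otherwise. The only additional identity required beyond (9)--(11) is the handshake lemma $\sum_{v\in V(G_{n_i})} deg_{G_{n_i}}(v) = 2m_i$, which converts the inner-graph degree sums into the sizes $m_i$.

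For part (i), I would write the triangle-free case as
\begin{align*}
\xi^c(\mathcal{G}) &= 2\sum_{v\in V(G_{n_1})}(n_1+n_2-1) + 2\sum_{v\in I(G_{n_1})}(n_3+2)\\
&\quad + 3\sum_{v\in V(G_{n_2})}\bigl(deg_{G_{n_2}}(v)+n_1\bigr) + 3\sum_{v\in V(G_{n_3})}\bigl(deg_{G_{n_3}}(v)+m_1\bigr),
\end{align*}
and after applying the handshake lemma in the last two sums, a single pass of algebra yields the claimed closed form. In the non-triangle-free case, every vertex has eccentricity $3$, so $\xi^c(\mathcal{G}) = 3\sum_v deg_{\mathcal{G}}(v) = 6\,|E(\mathcal{G})|$; substituting the CVE-join size formula $|E(\mathcal{G})| = m_1+m_2+m_3+n_1n_2+m_1n_3+\tfrac{n_1(n_1-1)}{2}$ recorded just after the definition of CVE-join produces the stated expression.

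For part (ii), the same four-way split applies with $ec(v)$ moved into the denominator. In the triangle-free case the $V(G_{n_1})$ and $I(G_{n_1})$ contributions carry a factor $\tfrac{1}{2}$ and the $V(G_{n_2})$, $V(G_{n_3})$ ones a factor $\tfrac{1}{3}$; clearing to a common denominator of $6$ reproduces the stated fraction. In the non-triangle-free case $\xi^{ce}(\mathcal{G}) = \tfrac{1}{3}\sum_v deg_{\mathcal{G}}(v) = \tfrac{2}{3}|E(\mathcal{G})|$, which again matches the claim. There is no conceptual obstacle here: the whole proof is mechanical, and the only things one must actively guard against are confusing $deg_{G_{n_i}}$ with $deg_{\mathcal{G}}$ inside the sums and mis-pairing an eccentricity value with the wrong vertex class when applying (10).
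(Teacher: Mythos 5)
Your proposal is correct and follows essentially the same route as the paper: the same four-way partition of $V(\mathcal{G})$, the same substitution of the degrees from (9) and eccentricities from (10)/(11), and the handshake lemma to turn the internal degree sums into $2m_i$. The only cosmetic difference is your shortcut $\xi^c(\mathcal{G})=6|E(\mathcal{G})|$ and $\xi^{ce}(\mathcal{G})=\tfrac{2}{3}|E(\mathcal{G})|$ in the non-triangle-free case, which the paper handles by the same termwise summation and dismisses with ``similarly.''
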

\begin{proof}
\begin{itemize}
    \item [(i)] Let $\mathcal{G}= G_{n_1}^{C}\vee(G_{n_2}^V\cup G_{n_2}^E)$, where $G_{n_1}$ is triangle-free, then from (1) by using (9) and (10), we have\\
\begin{equation*}
    \begin{aligned}
    \xi^c(\mathcal{G})&= \sum_{v\in V(G_{n_1})}2(n_1+n_2-1)+ \sum_{v\in I(G_{n_1})}2(n_3+2)\\
    &\; \; \;+ \sum_{v\in V(G_{n_2})}3(deg_{G_{n_2}}(v)+n_1)+ \sum_{v\in V(G_{n_3})} 3(deg_{G_{n_3}}(v)+m_1)\\
    &= 2n_1^2+5n_1n_2+5m_1n_3-2n_1+4m_1+6m_2+6m_3.
    \end{aligned}
\end{equation*}
Similarly in the case when $G_{n_1}$ is not a triangle-free graph by using (9) and (10) we get,
$$\xi^c(\mathcal{G})= 3n_1^2+6n_1n_2+6m_1n_3-3n_1+6(m_1+m_2+m_3).$$
\item [(ii)] Let $G_{n_1}$ be a triangle-free graph. Then from (2), by using (9) and (10), we get\\
\begin{equation*}
    \begin{aligned}
    \xi^{ce}(\mathcal{G})&= \sum_{v\in V(G_{n_1})}\frac{n_1+n_2-1}{2}+\sum_{v\in I(G_{n_1})}\frac{n_3+2}{2}+\sum_{v\in V(G_{n_2})}\frac{deg_{G_{n_2}} +n_1}{3}\\
    &\; \; \;+\sum_{v\in V(G_{n_3})}\frac{deg_{G_{n_3}}+m_1}{3}\\
    &= \frac{1}{6}\left(3n_1^2+5(n_1n_2+m_1n_3)+6m_1+4(m_2+m_3)-3n_1\right).
    \end{aligned}
\end{equation*}
Similarly in the case when $G_{n_1}$ is not a triangle-free graph by using (9) and (10) we get,
$$\xi^{ce}(\mathcal{G})= \frac{1}{3}\left(n_1^2+2(n_1n_2+m_1n_3)-n_1+2(m_1+m_2+m_3)\right).$$
\end{itemize}

\end{proof}
\begin{theorem}
  For $i= 1,2,3$, let $G_{n_i}$ be a graph of order $n_i$ and size $m_i$, then
 \begin{itemize}
     \item [(i)] $\tau(\mathcal{G})=\begin{cases} 2n_1+2m_1+3n_2+3n_3, & \text{if $G_{n_1}$ is triangle-free}\\3(n_1+n_2+n_3+m_1), &\text{otherwise}\end{cases}.$
     \item[(ii)]$aveg(\mathcal{G})= \begin{cases} \frac{2n_1+2m_1+3n_2+3n_3}{n_1+n_2+n_3+m_1}, & \text{if $G_{n_1}$ is triangle-free}\\3, &\text{otherwise}\end{cases}.$
 \end{itemize}
\end{theorem}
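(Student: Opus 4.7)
The proof is essentially a direct substitution into definitions (3) and (4), using the eccentricity information already compiled in formulas (10) and (11). The plan is to split into the two cases that were set up for $ec_{\mathcal{G}}$ and simply aggregate the contributions of the four vertex classes of $\mathcal{G}$, whose sizes are $n_1$ (vertices of $G_{n_1}$), $m_1$ (vertices of $I(G_{n_1})$), $n_2$ and $n_3$.

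First I would handle the triangle-free case. By (10), every vertex in $V(G_{n_1}) \cup I(G_{n_1})$ contributes $2$ and every vertex in $V(G_{n_2}) \cup V(G_{n_3})$ contributes $3$ to the sum in (3). So I just multiply each eccentricity by the size of the corresponding class and add:
\[
\tau(\mathcal{G}) = 2n_1 + 2m_1 + 3n_2 + 3n_3,
\]
which matches the stated formula. In the non-triangle-free case, (11) makes every eccentricity equal to $3$, and since the order of $\mathcal{G}$ is $n_1 + m_1 + n_2 + n_3$, the sum collapses to $3(n_1 + n_2 + n_3 + m_1)$.

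For the average eccentricity, I would just invoke the identity $aveg(\mathcal{G}) = \tau(\mathcal{G})/|V(\mathcal{G})|$ from (4), with $|V(\mathcal{G})| = n_1 + m_1 + n_2 + n_3$. In the triangle-free case this yields the stated quotient, and in the other case the ratio simplifies to $3$ because numerator and denominator share the factor $n_1 + n_2 + n_3 + m_1$.

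There is no real obstacle here; the only thing that needs care is to remember that the order of $\mathcal{G}$ includes the $m_1$ subdivision vertices of $I(G_{n_1})$, and that these subdivision vertices behave exactly like $V(G_{n_1})$ for eccentricity purposes (both have eccentricity $2$ in the triangle-free case). Once (10) and (11) are taken for granted, the entire argument is a two-line count for each part.
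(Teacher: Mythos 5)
Your proposal is correct and follows essentially the same route as the paper: a direct substitution of the eccentricity values from (10) and (11) into definitions (3) and (4), summed over the four vertex classes of sizes $n_1$, $m_1$, $n_2$, $n_3$. The paper's own proof is just a terser version of the same computation.
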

\begin{proof}
\begin{itemize}
    \item [(i)] Consider $\mathcal{G}= G_{n_1}^{C}\vee(G_{n_2}^V\cup G_{n_2}^E)$, when $G_{n_1}$ is triangle-free, then from (3) by using (10), we get\\
    \begin{equation*}
        \begin{aligned}
        \tau(\mathcal{G})&= 2n_1+2m_1+3n_2+3n_3.
        \end{aligned}
    \end{equation*}
    Similarly when $G_{n_1}$ is not a triangle-free graph we get\\
    \begin{equation*}
        \begin{aligned}
        \tau(\mathcal{G})&= 3(n_1+n_2+n_3+m_1).
        \end{aligned}
    \end{equation*}
    \item[(ii)] From (4), we get\\
    \begin{equation*}
        \begin{aligned}
        aveg(\mathcal{G})&= \frac{\tau(\mathcal{G})}{n_1+n_2+n_3+m_1}\\
        &= \begin{cases} \frac{2n_1+2m_1+3n_2+3n_3}{n_1+n_2+n_3+m_1}, & \text{if $G_{n_1}$ is triangle-free}\\3, &\text{otherwise}\end{cases}.
        \end{aligned}
    \end{equation*}
\end{itemize}
\end{proof}
\begin{theorem}
  For $i= 1,2,3$, let $G_{n_i}$ be a graph of order $n_i$ and size $m_i$, then
\begin{itemize}
     \item [(i)] $M_1(\mathcal{G})=\begin{cases} 4n_1+4m_1+9n_2+9n_3, & \text{if $G_{n_1}$ is triangle-free}\\9(n_1+n_2+n_3+m_1), &\text{otherwise}\end{cases}.$
     \item[(ii)]$M_2(\mathcal{G})= \begin{cases}4m_1+9(m_2+m_3)+2(n_1^2-n_1)+6(n_1n_2+m_1n_3) , & \text{if $G_{n_1}$ is triangle-free}\\9\left(m_1+m_2+m_3+n_1n_2+m_1n_3+\frac{n_1^2-n_1}{2}\right), &\text{otherwise}\end{cases}.$
 \end{itemize}
\end{theorem}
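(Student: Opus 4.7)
The plan is to prove both parts by direct substitution into the definitions (5), using the eccentricity formulas (10) and (11) together with a clean partition of $V(\mathcal{G})$ (for $M_1$) and $E(\mathcal{G})$ (for $M_2$). The two formulas in each part simply reflect the dichotomy in the eccentricity expression: in the non-triangle-free case every vertex has eccentricity $3$, so every sum collapses to $9\,|V(\mathcal{G})|$ or $9\,|E(\mathcal{G})|$ after pulling out the constant.

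For part (i), I would partition $V(\mathcal{G})$ into the four natural blocks $V(G_{n_1})$, $I(G_{n_1})$, $V(G_{n_2})$, $V(G_{n_3})$ of sizes $n_1$, $m_1$, $n_2$, $n_3$. When $G_{n_1}$ is triangle-free, (10) assigns eccentricity $2$ to the first two blocks and $3$ to the last two, yielding
\[
M_1(\mathcal{G})=4n_1+4m_1+9n_2+9n_3 .
\]
When $G_{n_1}$ is not triangle-free, (11) forces $ec_{\mathcal{G}}(v)=3$ throughout, so $M_1(\mathcal{G})=9(n_1+m_1+n_2+n_3)$.

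For part (ii), the bookkeeping step is to partition $E(\mathcal{G})$ by the blocks its endpoints lie in. The edges fall into six classes: the $2m_1$ subdivision edges between $V(G_{n_1})$ and $I(G_{n_1})$; the $\binom{n_1}{2}-m_1$ edges between non-adjacent vertices of $G_{n_1}$ added in forming $C(G_{n_1})$; the $m_2$ edges of $G_{n_2}$; the $m_3$ edges of $G_{n_3}$; the $n_1n_2$ join edges between $V(G_{n_1})$ and $V(G_{n_2})$; and the $m_1n_3$ join edges between $I(G_{n_1})$ and $V(G_{n_3})$. In the triangle-free case (10) gives eccentricity $2$ to both endpoints of the first two classes (contribution $4$ per edge), $3$ to both endpoints of the $G_{n_2}$- and $G_{n_3}$-edges (contribution $9$ per edge), and $2\cdot 3=6$ to the two cross classes; summing yields
\[
M_2(\mathcal{G})=4m_1+9(m_2+m_3)+2(n_1^2-n_1)+6(n_1n_2+m_1n_3).
\]
In the non-triangle-free case, (11) gives contribution $9$ to every edge, so $M_2(\mathcal{G})=9|E(\mathcal{G})|$, which equals the stated expression by the size formula recorded after the definition of CVE-join.

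There is no real obstacle here; the only care needed is correctly enumerating the edges of $C(G_{n_1})$, which contains $2m_1$ subdivision edges plus $\binom{n_1}{2}-m_1$ edges joining originally non-adjacent vertices, and remembering that the vertices of $I(G_{n_1})$ and $V(G_{n_1})$ both have eccentricity $2$ in the triangle-free case (so the two pieces of $C(G_{n_1})$ contribute the same per-edge weight). Once this is set up, both formulas drop out by a single line of arithmetic in each case.
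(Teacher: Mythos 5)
Your proposal is correct and follows essentially the same route as the paper: both parts are proved by direct substitution into (5), partitioning $V(\mathcal{G})$ into the four blocks for $M_1$ and $E(\mathcal{G})$ into the six edge classes (with counts $2m_1$, $\binom{n_1}{2}-m_1$, $m_2$, $m_3$, $n_1n_2$, $m_1n_3$) for $M_2$, exactly as in the paper's displayed sums. Your explicit enumeration of the $C(G_{n_1})$ edge counts is slightly more careful than the paper's notation, but the computation and conclusions are identical.
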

\begin{proof}
\begin{itemize}
    \item [(i)] Let $\mathcal{G}= G_{n_1}^{C}\vee(G_{n_2}^V\cup G_{n_2}^E)$, where $G_{n_1}$ is triangle-free, then from (5) by using (10) we get\\
    \begin{equation*}
        \begin{aligned}
        M_1(\mathcal{G})&= \sum_{v\in V(G_{n_1})}2^2+\sum_{v\in I(G_{n_1})}2^2+\sum_{v\in V(G_{n_2})}3^2+\sum_{v\in V(G_{n_3})}3^2\\
        &= 4n_1+4m_1+9n_2+9n_3.
        \end{aligned}
    \end{equation*}
    Similarly when $G_{n_1}$ is not a triangle-free graph we get,\\
    $$M_1(\mathcal{G})= 9(n_1+n_2+n_3+m_1).$$
    \item[(ii)] Let $G_{n_1}$ be a triangle-free graph. Then from (5) by using (10) we get\\
    \begin{equation*}
        \begin{aligned}
        M_2(\mathcal{G})&= \sum_{uv\in E(G_{n_2})}3\times 3+\sum_{uv\in E(G_{n_3})}3\times 3+\sum_{{\substack{u\in V(G_{n_1}), \\ v\in V(G_{n_2})}}}2\times 3\\
        &\; \; +\sum_{{\substack{u\in I(G_{n_1}), \\ v\in V(G_{n_3})}}}2\times 3+\sum_{{\substack{uv\in E(\mathcal{G}),\\u\in V(G_{n_1}), v\in I(G_{n_1})}}}2\times 2
        +\sum_{{\substack{uv\in E(\mathcal{G}),\\u,v\in V(G_{n_1})}}}2\times 2\\
        &= 4m_1+9m_2+9m_3+2n_{1}^{2}-2n_1+6n_1n_2+6m_1n_3.
        \end{aligned}
    \end{equation*}
     Similarly when $G_{n_1}$ is not a triangle-free graph we get,\\
    $$M_2(\mathcal{G})= 9\left(m_1+m_2+m_3+n_1n_2+m_1n_3+\frac{n_1^2-n_1}{2}\right).$$
\end{itemize}

\end{proof}
\begin{theorem}
  For $i= 1,2,3$, let $G_{n_i}$ be a graph of order $n_i$ and size $m_i$, then
 \begin{itemize}
     \item [(i)] $GA_{4}(\mathcal{G})=\begin{cases} m_1+m_2+m_3+\frac{2\sqrt{6}}{5}(n_1n_2+m_1n_3)+\frac{n_1^2-n_1}{2}, & \text{if $G_{n_1}$ is triangle-free}\\m_1+m_2+m_3+n_1n_2+m_1n_3+\frac{n_1^2-n_1}{2}, &\text{otherwise}\end{cases}.$
     \item[(ii)]$ABC_{5}(\mathcal{G})= \begin{cases}\frac{2}{3}(m_2+m_3)+\frac{1}{\sqrt{2}}\left(n_1n_2+m_1n_3+m_1+\frac{n_1(n_1-1)}{2}\right) , & \text{if $G_{n_1}$ is triangle-free}\\\frac{2}{3}(m_1+m_2+m_3+n_1n_2+m_1n_3)+\frac{n_1(n_1-1)}{3}, &\text{otherwise}\end{cases}.$
 \end{itemize}
\end{theorem}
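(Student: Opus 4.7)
The plan is to evaluate the two summations in the definitions (6) and (7) directly by partitioning the edge set of $\mathcal{G}$ according to the eccentricities of the two endpoints, and then applying (10) or (11) to each term. This mirrors the bookkeeping approach already used for $M_2$ in Theorem 3.4(ii).

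First I would list and count the edges of $\mathcal{G}$ by structural type: the $m_2$ edges of $G_{n_2}$; the $m_3$ edges of $G_{n_3}$; the $n_1n_2$ join edges between $V(G_{n_1})$ and $V(G_{n_2})$; the $m_1n_3$ join edges between $I(G_{n_1})$ and $V(G_{n_3})$; the $2m_1$ subdivision edges of $C(G_{n_1})$ (each edge of $G_{n_1}$ contributes two such incidences to its newly inserted vertex in $I(G_{n_1})$); and the $\binom{n_1}{2}-m_1$ non-adjacency edges added within $V(G_{n_1})$ by the central-graph construction. As a sanity check, these multiplicities sum to the size $m_1+m_2+m_3+n_1n_2+m_1n_3+\tfrac{n_1(n_1-1)}{2}$ of $\mathcal{G}$ recorded after its definition.

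In the triangle-free case, (10) asserts that every vertex of $V(G_{n_1})\cup I(G_{n_1})$ has eccentricity $2$ and every vertex of $V(G_{n_2})\cup V(G_{n_3})$ has eccentricity $3$, so the edges of $\mathcal{G}$ collapse into three bins indexed by the unordered pair of endpoint eccentricities: the $(2,2)$-bin containing the $2m_1+\binom{n_1}{2}-m_1=m_1+\tfrac{n_1(n_1-1)}{2}$ edges internal to $C(G_{n_1})$, the $(3,3)$-bin containing the $m_2+m_3$ edges of $G_{n_2}$ and $G_{n_3}$, and the $(2,3)$-bin containing the $n_1n_2+m_1n_3$ join edges. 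The per-edge contribution $\frac{2\sqrt{ab}}{a+b}$ evaluates on $(a,b)\in\{(2,2),(3,3),(2,3)\}$ to $1,1,\tfrac{2\sqrt{6}}{5}$, and the contribution $\sqrt{\frac{a+b-2}{ab}}$ evaluates to $\tfrac{1}{\sqrt{2}},\tfrac{2}{3},\tfrac{1}{\sqrt{2}}$; weighting each by the bin size above and summing produces the two advertised formulas.

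In the remaining case, (11) collapses all eccentricities to $3$, so every edge contributes the constant value $1$ to $GA_4(\mathcal{G})$ and $\tfrac{2}{3}$ to $ABC_5(\mathcal{G})$, whence both claims reduce to multiplying the constant by the total edge count of $\mathcal{G}$. I do not anticipate a genuine analytical obstacle here; the only place where a miscount is easy is the $C(G_{n_1})$ bookkeeping, where one must remember that each edge of $G_{n_1}$ becomes \emph{two} incidence edges after subdivision and that the non-adjacency edges contribute $\binom{n_1}{2}-m_1$ (not $\binom{n_1}{2}$), so that their sum simplifies cleanly to $m_1+\tfrac{n_1(n_1-1)}{2}$ in the $(2,2)$-bin.
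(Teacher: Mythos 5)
Your proposal is correct and follows essentially the same route as the paper: both partition $E(\mathcal{G})$ into the structural classes (edges of $G_{n_2}$, of $G_{n_3}$, the two families of join edges, the $2m_1$ subdivision edges, and the $\binom{n_1}{2}-m_1$ non-adjacency edges), apply the eccentricity values from (10) or (11) to each class, and sum the constant per-edge contributions. Your bin counts and per-edge evaluations all agree with the paper's computation, so no gap remains.
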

\begin{proof}
\begin{itemize}
    \item [(i)]Let $\mathcal{G}= G_{n_1}^{C}\vee(G_{n_2}^V\cup G_{n_2}^E)$, where $G_{n_1}$ is triangle-free, then from (6) by using (10) we have\\
    \begin{equation*}
        \begin{aligned}
        GA_{4}(\mathcal{G})&= \sum_{uv\in E(G_{n_2})}2\frac{\sqrt{3\times 3}}{3+3}+\sum_{uv\in E(G_{n_3})}2\frac{\sqrt{3\times 3}}{3+3}+\sum_{{\substack{u\in V(G_{n_1}),\\ v\in V(G_{n_2})}}}2\frac{\sqrt{2\times 3}}{2+3}\\& +\sum_{{\substack{u\in I(G_{n_1}),\\ v\in V(G_{n_3})}}}2\frac{\sqrt{2\times 3}}{2+3}+\sum_{{\substack{u\in V(G_{n_1}),\\ v\in I(G_{n_1}),\\ uv\in E(\mathcal{G})}}}2\frac{\sqrt{2\times 2}}{2+2}+\sum_{{\substack{uv\in E(\mathcal{G}),\\  u,v\in V(G_{n_1})}}}2\frac{\sqrt{2\times 2}}{2+2}\\
        &= m_1+m_2+m_3+\frac{2\sqrt{6}}{5}(n_1n_2+m_1n_3)+\frac{n_1^2-n_1}{2}.
        \end{aligned}
    \end{equation*}
    Similarly when $G_{n_1}$ is not a triangle-free graph we get,\\
    $$GA_{4}(\mathcal{G})= m_1+m_2+m_3+n_1n_2+m_1n_3+\frac{n_1^2-n_1}{2}.$$
    \item[(ii)] Let $G_{n_1}$ be a triangle-free graph. Then from (7) by using (10) we get\\
    \begin{equation*}
       \scriptsize{ \begin{aligned}
        ABC_{5}(\mathcal{G})&= \sum_{uv\in E(G_{n_2})}\sqrt{\frac{3+3-2}{3\times 3}}+\sum_{uv\in E(G_{n_3})}\sqrt{\frac{3+3-2}{3\times 3}}+\sum_{{\substack{u\in V(G_{n_1}),\\  v\in V(G_{n_2})}}}\sqrt{\frac{2+3-2}{2\times 3}}\\& +\sum_{{\substack{u\in I(G_{n_1}),\\   v\in V(G_{n_3})}}}\sqrt{\frac{2+3-2}{2\times 3}}+\sum_{{\substack{uv\in E(\mathcal{G}),\\u\in V(G_{n_1}),\\ v\in I(G_{n_1})}}}\sqrt{\frac{2+2-2}{2\times 2}}+\sum_{{\substack{uv\in E(\mathcal{G}) \\ u,v\in V(G_{n_1})}}}\sqrt{\frac{2+2-2}{2\times 2}}\\
        &= \frac{2}{3}(m_2+m_3)+\frac{1}{\sqrt{2}}\left(n_1n_2+m_1n_3+m_1+\frac{n_1(n_1-1)}{2}\right).
        \end{aligned}}
    \end{equation*}
     Similarly when $G_{n_1}$ is not a triangle-free graph we get,\\
    $$ABC_{5}(\mathcal{G})= \frac{2}{3}(m_1+m_2+m_3+n_1n_2+m_1n_3)+\frac{n_1(n_1-1)}{3}.$$
\end{itemize}
\end{proof}
\section{Conclusion}
Topological indices play an important role in mathematical chemistry. In QSAR/QSPR study, topological indices can be used to estimate the bioactivity of chemical compounds. In modeling the chemical and other properties of molecules, we can use topological indices as a tool. This article gives the results related to topological indices like the eccentric connectivity index, connective eccentricity index, total-
eccentricity index, average eccentricity index, Zagreb indices, geometric-arithmetic index,
atom-bond connectivity index, and Wiener index of the newly defined graph operation called central vertex-edge join (CVE-join). Also, the $D$-spectrum of CVE-join of three regular graphs $G_{n_1}$, $G_{n_2}$ and $G_{n_3}$ when $G_{n_1}$ is a triangle-free are obtained. These results enable us to construct new $D$-equienergetic graph families. 

\section*{Acknowledgements} 
The authors would like to thank the DST, Government of India, for providing support to carry out this work under the scheme 'FIST'(No.SR\\/FST/MS-I/2019/40).


\end{document}